\theoremstyle{plain}
\newtheorem{theorem}{Theorem}[section]
\newtheorem{lemma}[theorem]{Lemma}
\renewenvironment{proof}[1][Proof]{\textbf{#1.} }{\ \rule{0.5em}{0.5em} \par }
\theoremstyle{remark}
\theoremstyle{definition}
\def\lbar#1{\lfloor{#1\rfloor}}
\def\111{{\mathbb{I}}}
\def\RR{\mathbb{R}}\def\TT{\mathbb{T}_t}
\def\EE{\mathbb{E}}
\def\NN{\mathbb{N}}
\def\la{{\lambda}}
\def\De{{\Delta}}
\def\Om{{\Omega}}
\def\al{{\alpha}}
\def\De{{\Delta}}
\def\tr{{ \hbox{ Tr} }}
\def\la{{\lambda}}
\def\al{{\alpha}}
\newcommand{\JJ}{\mathcal J}
\let\Section=\section
\def\section{\setcounter{equation}{0}\Section}
\date{}
\title[Logarithmic Euler-Maruyama scheme]{logarithmic Euler-Maruyama scheme  for multi-dimensional stochastic delay equations with jumps}
\author[Agrawal]{Nishant Agrawal} 
\address{Department of Mathematical and Statistical Sciences \\
 University of Alberta at Edmonton \\
Edmonton,  Canada, T6G 2G1}
\email{nagrawal@ualberta.ca, yaozhong@ualberta.ca}
\author[Hu]{Yaozhong Hu}
\subjclass[2010] {Primary 60H05; Secondary 60G51, 60H30}
\keywords{}
\begin{document}
\maketitle
\section{Introduction}
 In \cite{rpaper} we introduced a logarithmic   Euler-Maruyama scheme for a
 single stochastic delay equation, which preserve positivity if the solution to the original equation is positive.
The convergence rate was  also obtained for such scheme.
This  scheme is important for simulation of the paths of the 
equation. It plays important role in  option pricing for example since
we often cannot obtain the explicit pricing value and we need to
use Monte-Carlo to complete the evaluation. Naturally our next question would be what will be the
analogous  scheme for a system of stochastic delay equations and if such  schemes converges.
This type of problems is  very important   since there is always more than one stock in the real  market.
Now in more than one dimension, the problem of positive solution
and the numerical schemes which preserve the positivity are much more complicated.
 In this paper 
we shall extend 
our work in \cite{rpaper} to  a system of stochastic delay differential equations. The 
problems of existence and uniqueness of a  positive
are solved. The multi-dimensional logarithmic  
Euler-Maruyama scheme are constructed which preserve the positivity of the 
approximate solutions. The scheme is proved to be convergent with rate $0.5$.
\section{Positivity}
We consider the following system of stochastic delayed differential equations: 
  \begin{empheq}[left=\empheqlbrace]{align }
       dS_i(t)&=    \sum_{j=1}^d f_{ij} (S (t-b))  
S_j (t) dt \nonumber\\
& \qquad\qquad  +S_i(t-)  \sum_{j=1}^d g_{ij} ( S (t-b)) dZ_j(t),, \quad i=1, \cdots, d\,,
\label{e.2.1} \\
S_i(t) &=  \phi_i(t) \,,\quad t\in [-b, 0]\,, \ i=1, \cdots, d\,,  \nonumber
 \end{empheq}
where  $S(t)=(S_1(t), \cdots, S_d(t))^T$  and 
\begin{enumerate}
\item[(i)] 
 $f_{ij}, g_{ij}:\RR^d\rightarrow
\RR$ are some given bounded measurable functions for all$ \hspace{2mm} 0 \leq i,j \leq d$.
\item[(ii)]  $b>0$  is  a given number representing the delay of the equation.
\item[(iii)] $\phi_i:    [-b,0]\rightarrow \RR$ is a 
(deterministic) measurable function for all$ \hspace{2mm} 0 \leq i \leq d$.
\item[(iv)] 
$Z_j(t)=\sum_{k=1}^{N_j(t)}Y_{j,k}$ are L\'evy processes,  where $Y_{j,k}$ are i.i.d random variables, \ 
$N_\ell (t)$ are independent Poisson random processes which are also independent of 
$Y_{j,k}$  for $j, \ell, =1,2, \cdots, d$,
$k  =1,2, \cdots$ 
\end{enumerate} 
 Let $|\cdot|$  
Euclidean   norm in $\RR^d$.  If $A$ is $d\times m$ matrix, we denote
\[
|A|=\sup_{|x|\le 1} |Ax|\,.
\]
For example, if $A=I+M$ is a $d\times d$ matrix, where $M=(m_{ij})_{1\le i,j\le d}$
is a matrix, then we can bound the norm of $A$ as follows.
Let $ 0\le \la_1\le  \cdots\le \la_d $ be   eigenvalues of $M^TM$
(since $M^TM$ is a positive definite matrix, we can assume that its eigenvalues are all positive). 
Then 
\begin{eqnarray*}
| I+M |
&=&\sup_{|x|\le 1} 
 \sqrt{|x|^2+x^TM^TMx} \le \sqrt{ 1+\max_{1\le i\le d} \la_i   }|x|\\
 &\le& \sqrt{ 1+\sum_{i=1}^d \la_i   }|x|\,. 
\end{eqnarray*}
But $\sum_{i=1}^d \la_i=\tr (M^TM)$. Thus we have
\begin{equation}
| I+M | 
 \le  \sqrt{ 1+\tr (M^TM) }|x|\,.  
\end{equation}

To study the above stochastic differential equation, it is common to introduce the
Poisson random measure associated with the L\'evy process $Z_j(t)$. We write the jumps of the process $Z_j$ at time $t$  by
   \[
    \Delta Z_j(t):= Z_j(t) - Z_j(t-) \quad \hbox{if $ \Delta  Z_j(t)\not=0$} \hspace{3mm} j=1,2, \cdots,  d\,.
    \]
 Denote  $\mathbb{R}_0 := \mathbb{R} \backslash \{0\}$ and let $\mathcal{B}(\mathbb{R}_0)$ be the Borel $\sigma$-algebra generated by the family of
 all Borel subsets $U \subset \mathbb{R}$, such that $\Bar{U} \subset \mathbb{R}_0$. For any 
 $t>0$ and for any  $U \in \mathcal{B}(\mathbb{R}_0)$ 
  we   define the {\it Poisson random measure},  
 $N_j: [0, T]\times \mathcal{B}(\mathbb{R}_0)\times \Om\rightarrow \RR$ (without confusion we use the same notation $N$),  associated with 
 the L\'evy process $Z_j(t)$ by 
\begin{equation}
 N_j(t, U) := \sum_{0 \leq s \leq t, \ \Delta Z_j(s)\not =0}\chi_U(\Delta Z_j(s)), \hspace{5mm} j=1,2,\cdots,d,     \end{equation} 
 where $\chi_U$ is the indicator function of $U$.
 The associated  L\'evy measure $\nu$ of the L\'evy process $Z_j$ is  given  by
 \begin{equation}
 \nu_j(U) := \mathbb{E}[N_j(1,U)] \hspace{10mm} j=1,2,\cdots, d.
 \end{equation}  
We now define the compensated Poisson random  measure $\tilde{N}_j$ associated with
the L\'evy process $Z_j(t)$ by 
 \begin{equation}
 \Tilde{N}_j(dt,dz) := N_j(dt,dz) - \EE\left[ N_j(dt,dz)  \right] = N_j(dt,dz) - \nu_j(dz)dt\,. 
 \end{equation}
We    assume that the process $Z_j(t)$ 
has only bounded negative jumps to guarantee that the solution $S(t)$ to \eqref{e.2.1} is positive. This means that there is an interval $\JJ=[-R, \infty)$ bounded from the left 
such that $\Delta Z_j(t)
\in \JJ$ for all $t>0$ and for all $j=1,2, \cdots d$. 

With these notations,  we can write
\[
 {Z}_j(t) = \displaystyle\int_{ [0,t] \times \JJ } z  {N}_j (ds,dz) \quad {\rm or}\quad d {Z}_j(t) = \displaystyle\int_{ \JJ } z  {N}_j (dt,dz)  
 \]
and  write \eqref{e.2.1} as 
\begin{empheq}[left=\empheqlbrace]{align }
 dS_i(t)&= \sum_{j=1}^d f_{ij} (S(t-b))  
S_j (t) dt +S_i(t-)\sum_{j=1}^d \displaystyle\int_{\JJ}zg_{ij} ( S (t-b))\nu_j(dz)dt\nonumber\\
& \qquad\qquad +S_i(t-) \sum_{j=1}^d \displaystyle\int_{\JJ}zg_{ij} ( S (t-b)) \tilde{N}_j(dz,dt)\,, \nonumber\\
S_i(t) &= \phi_i(t) \,,\quad t\in [-b, 0]\,, \ i=1, \cdots, d\,. \label{e.2.5} 
 \end{empheq}
In fact we can consider a slightly more general version of system of equations than \eqref{e.2.5}:
\begin{empheq}[left=\empheqlbrace]{align }
 dS_i(t)&=    \sum_{j=1}^d f_{ij} (S(t-b))  
S_j (t) dt \nonumber\\
& \qquad\qquad +S_i(t-)  \sum_{j=1}^d \displaystyle\int_{\JJ}g_{ij} (z, S (t-b)) \tilde{N}_j(dz,dt),\quad i=1, \cdots, d\,,
\nonumber\\
S_i(t) &=  \phi_i(t) \,,\quad t\in [-b, 0]\,, \ i=1, \cdots, d\, .    
\label{e.2.6} 
 \end{empheq}
First, we discuss  the existence, uniqueness and positivity  
of  \eqref{e.2.6}. 

\begin{theorem} \label{t.2.1}  
Suppose that $f_{ij}
:\RR^d \rightarrow \RR$  and $  g_{ij}:\JJ\times \RR^d\rightarrow \RR\,, \ 
1\le i,j\le d$ are  bounded measurable  functions such that  there is a 
constant $\al_0>1$ satisfying 
$g_{ij}(z, x) \ge \al_0>-1$ for all $
1\le i,j\le d$, for all $z\in \JJ$ and for all 
$ x\in \RR$,  where $\JJ=[-R, \infty)$ is the common supporting set of the Poisson measures $\tilde N_{j}(t, dz), j=1, \cdots, d$.  
If for all $i\not= j$, $f_{ij}(x)\ge 0$ for all $x\in \RR$, 
and $\phi_i(0)\ge 0\,, \ i=1, \cdots, d$, then,  the stochastic differential delay equation \eqref{e.2.6} 
admits a unique pathwise solution
such  that    $S_i(t) \geq 0$   almost surely for all $i=1, \cdots, d$  and for all $t > 0$.   
  \end{theorem}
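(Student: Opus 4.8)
The plan is to combine the classical method of steps with the finite-activity structure of the driving noise. Since each $N_j$ is a Poisson random measure associated with a compound Poisson process, almost every path has only finitely many jumps on any compact interval, and the superposition $\sum_{j=1}^d N_j$ likewise jumps at a locally finite set of times with (almost surely) no two components jumping simultaneously. I would first solve \eqref{e.2.6} on $[0,b]$, where the delayed argument satisfies $S(t-b)=\phi(t-b)$ and is therefore a known deterministic input; this turns the coefficients $f_{ij}(S(t-b))$ and $g_{ij}(z,S(t-b))$ into bounded measurable functions of $t$ alone. Having constructed $S$ on $[0,b]$, I would use it as the history on $[b,2b]$ and iterate: on each step $[kb,(k+1)b]$ the delayed term is already determined from the preceding step, so \eqref{e.2.6} reduces to an equation that is linear in the current state and driven by a finite-activity jump measure.

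For existence and uniqueness on a single step, I would rewrite the compensated equation in terms of the raw measure $N_j$, moving the compensator into the drift:
\[
\begin{aligned}
dS_i(t)&=\Bigl[\sum_{j=1}^d f_{ij}(S(t-b))S_j(t)-S_i(t)\sum_{j=1}^d\int_{\JJ}g_{ij}(z,S(t-b))\nu_j(dz)\Bigr]dt\\
&\quad+S_i(t-)\sum_{j=1}^d\int_{\JJ}g_{ij}(z,S(t-b))N_j(dz,dt).
\end{aligned}
\]
Between two consecutive jump times of $\sum_j N_j$ the jump term vanishes and the equation is a linear ODE $\dot S(t)=A(t)S(t)$ with $A(t)$ a bounded measurable matrix (boundedness of the diagonal uses that each $\nu_j$ is a finite measure and $g_{ij}$ is bounded); this has a unique absolutely continuous (Carath\'eodory) solution given by its transition matrix. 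At each jump time $\tau$ of a single component $N_j$ with size $z=\Delta Z_j(\tau)$, the solution is updated by $S_i(\tau)=S_i(\tau-)\bigl(1+g_{ij}(z,S(\tau-b))\bigr)$. Concatenating these deterministic ODE flows and multiplicative jump updates across the locally finite jump set produces a c\`adl\`ag adapted process, and uniqueness follows from the uniqueness of linear-ODE solutions together with the deterministic nature of each jump update.

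Positivity then follows from two invariance properties matched to this decomposition. At a jump time, $S_i(\tau)=S_i(\tau-)\bigl(1+g_{ij}(z,S(\tau-b))\bigr)$, and the hypothesis $g_{ij}\ge\alpha_0>-1$ makes the factor $1+g_{ij}$ strictly positive, so a nonnegative $S_i(\tau-)$ yields a nonnegative $S_i(\tau)$. Between jumps, the off-diagonal entries of $A(t)$ equal $f_{ij}(S(t-b))\ge 0$ for $i\ne j$ by hypothesis, so $A(t)$ is a Metzler matrix; the compensator correction affects only the diagonal and hence does not disturb this. A time-dependent Metzler flow leaves the nonnegative orthant invariant: choosing $\lambda>0$ large enough that $A(t)+\lambda I$ has all entries nonnegative (possible by boundedness), the rescaled state $y(t)=e^{\lambda t}S(t)$ solves $\dot y=(A(t)+\lambda I)y$, whose Peano--Baker transition matrix is a sum of iterated integrals of products of the entrywise-nonnegative matrix $A(t)+\lambda I$ and is therefore entrywise nonnegative. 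Starting from $S(0)=\phi(0)\ge 0$ and alternating the two positivity-preserving operations over the locally finite jump times gives $S_i(t)\ge 0$ for all $i$ and all $t>0$.

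The main obstacle is the positivity analysis of the drift, namely establishing that the time-dependent Metzler matrix $A(t)$ generates a flow preserving the positive orthant, and verifying that the compensator term $-S_i\sum_j\int_{\JJ}g_{ij}\,\nu_j(dz)$---the only place where the sign structure could be spoiled---enters solely on the diagonal and thus cannot violate the Metzler condition. The existence and uniqueness part is comparatively routine once the finite-activity structure reduces each step to interlaced linear ODEs and explicit multiplicative jump updates, though one must still check adaptedness and the c\`adl\`ag property of the concatenated solution and confirm that the step-by-step construction extends consistently to all $t>0$.
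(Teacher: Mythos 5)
Your proposal is correct, and it takes a genuinely different route from the paper. The paper proves positivity through a product decomposition: it writes $S_i(t)=p_i(t)Y_i(t)$, where $Y_i$ solves the scalar equation containing the diagonal drift $f_{ii}$ and all the jump terms, so that it has an explicit exponential (Dol\'eans-Dade type) representation, formula \eqref{e.2.7a}, which is manifestly positive under $1+g_{ij}\ge 1+\al_0>0$; the factor $p_i$ solves the linear ODE system driven by the off-diagonal $f_{ij}\ge 0$, and its nonnegativity is quoted from Bellman's matrix-analysis theorem. Existence and uniqueness are dismissed as routine. Your interlacing construction instead builds the solution pathwise from the finite-activity structure: Carath\'eodory flows of the Metzler matrix $A(t)$ between jumps (your Peano--Baker argument is precisely a proof of the Bellman result the paper cites) and multiplicative updates $1+g_{ij}(z,\cdot)>0$ at the locally finite jump times, correctly noting that independent Poisson processes almost surely never jump simultaneously, and correctly observing that the compensator correction is purely diagonal so it cannot spoil the Metzler sign structure. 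Both arguments ultimately rest on the same two structural facts (nonnegative off-diagonal drift, jump factors bounded below by $1+\al_0$), but they package them differently, and each packaging has its advantages: yours is elementary and self-contained, delivers existence, uniqueness and positivity simultaneously in one construction, and avoids the It\^o-formula verification that the product ansatz requires; the paper's decomposition, by contrast, is not merely a proof device --- it is exactly the factorization $S_i=p_iX_i$ that the logarithmic Euler--Maruyama scheme of Section 3 discretizes, so the paper's proof doubles as the motivation for the numerical method, and it does not lean on finite activity of the L\'evy measures (though in the paper's compound-Poisson setting that costs you nothing).
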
 
\begin{proof} The theorem is stated and proved in  \cite[Theorem 1]{rpaper}  following  the method of \cite{humulti1} 
(where the case of Brownian motion was dealt with).  In fact,  the existence and uniqueness are routine and  easy.  The main  point is to show the positivity of the solution. The idea
in \cite{rpaper}  was to decompose the solution to \eqref{e.2.6} as product of some nonnegative  processes.  Here we give a slightly different     decomposition which will prove the positivity and will be very useful in our numerical scheme.

Denote   $\tilde f_{ij}(t)=f_{ij}(S(t-b))$ and   $\tilde g_{ij}(t,z)=g_{ij}(z,S(t-b))$.   
Let $Y_i(t)$ be the solution to the stochastic differential equation
\[
dY_i(t) =   \tilde f_{ii} (t)  
Y_i(t) dt +  Y_i(t-)  \sum_{j=1}^d \int_{\JJ} \tilde g_{ij} (t,z)   \tilde N_{j} (dt, dz) 
\]
with initial conditions $Y_i(0)=\phi_i(0)$.  Since this is a scalar equation for $Y_i(t)$, its explicit solution can be represented 
\begin{eqnarray}
 Y_i(t) &=&\phi_i(0)\exp\bigg\{  
 \sum_{j=1}^d    \log\left[  1+\tilde g_{ij} 
 (s,z ) \right] \Tilde{N}_j(ds,dz)+\int_0^t 
  \tilde f_{ii}(s)   ds\nonumber\\
  &&\qquad  + \sum_{j=1}^d \int
  _{[0, t]\times \JJ} \Big( \log\left[1+\tilde 
  g_{ij}(s,z)\right] -\tilde g_{ij}(s, z ) \Big)ds\nu_j(dz)     \bigg\}\,,  
  \label{e.2.7a} 
 \end{eqnarray}
 where $\nu_j$ is the associated L\'evy measure for $\tilde N_j(ds, dz)$.     Let 
$p_i(t)$ be the solution to the following system of 
equations
\[
dp_i(t) =\sum_{j=1, j\not=i}^d \tilde f_{ij}(t)p_j(t) dt\,, \quad 
p_i(0)=1\,, \quad i=1, \cdots, d\,.  
\]
Since by the assumption    that  $\tilde 
f_{ij}(t)\ge 0$ almost surely for  all $i\not=j$, Theorem 
 \cite[p.173]{matanlysis}   implies 
that
$p_{i}(t)\ge 0$ for all $t\ge 0$  almost surely. 
Now it is easy to check by the It\^o formula that $\tilde S_i(t)=p_i(t)Y_i(t)$  satisfies 
\eqref{e.2.6}   and 
$\tilde S_i(t)\ge 0$ almost surely. By the uniqueness of the solution 
we see that $S_i(t)=\tilde S_i(t)$ for $i=1, \cdots, d$.  The theorem is then proved. 
\end{proof}

 \section{Convergence rate  of logarithmic Euler-Maruyama scheme} 
In this section we construct numerical scheme to approximate \eqref{e.2.1} by positive value processes. 
   
Motivated by the proof of Theorem \ref{t.2.1} we shall decompose equation \eqref{e.2.1} into the following system:
\begin{subequations} 
  \begin{empheq}[left=\empheqlbrace]{align }
dX_i(t) &= f_{ii}((S(t-b)))
X_i(t )dt+X_i(t-) \sum_{j=1}^d g_{ij} (S(t-b)) dZ_{j} (t)\label{e.3.1a}\\ 
 dp_i(t)&=\sum_{j=1,   j\not=i}^df_{ij}((S(t-b)))p_j(t)dt,   \label{e.3.1b} \\ 
S_i(t)& = p_i(t)\cdot X_i(t)\,, \hspace{3mm} i=1,2, \cdots,d\,. 
\label{e.3.1c}  
 \end{empheq}
\end{subequations}
The reason is, as in  
the proof of Theorem \ref{t.2.1}, that $X_i(t)$ and $p_{i}(t)$ are all positive.

Consider a finite time interval $[0, T]$ for some fixed $T>0$  and let    $\pi$ be a  partition  of the time interval $[0, T]$:
\[
\pi: 0=t_0<t_1<\cdots <t_n=T\,.
\]  
Let $\Delta_k  =  
 t_{k+1}-t_k  $  and $\Delta =\max_{0\le k\le n-1} 
 (t_{k+1}-t_k) $ and assume  $\De<b$.


We shall now construct explicit logarithmic 
Euler-Marauyama recursive scheme to numerically 
solve \eqref{e.3.1a}-\eqref{e.3.1c}. 
%
By the expression \eqref{e.2.7a} 
the solution $X$ on $[t_k, t_{k+1}]$ to Equation \eqref{e.3.1a} 
%
is given\\
 by 
\begin{eqnarray*}
 X_i(t) &=&X_i(t_k) \exp\bigg\{ \int_{t_k}^t 
 f_{ii}(S(s-b)) ds+\sum_{j=1}^d
 \int_{t_k}^t \log\left[ 1+ g_{ij} 
 (S(u-b) ) dZ_j(s) 
 \right] \bigg\}\,, 
 \end{eqnarray*}
 where $Z_j(t):=\sum_{k=1}^{N_j(t)}Y_{j,k}$.  
  If  we denote by  $F(x ) $  the $d\times d$ matrix whose diagonal elements are all zero and whose off diagonal entries are  $f_{ij}(x)$,   namely,
\[
 F_{ij}(x )  =\begin{cases}
 0&\qquad \hbox{when $i=j$}\\
 f_{ij}(x) &\qquad \hbox{when $i\not=j$}\,. 
 \\
 \end{cases} 
\]
With this notation  we can write 
    \eqref{e.3.1b}   as a matrix form: 
\begin{equation}
\frac{dp(t)}{dt}=F((S(t-b)))p(t)\,, 
\quad p(t)=(p_1(t), \cdots, p_d(t))^T\,, 
\label{e.3.3} 
\end{equation} 
and its solution on the sub-interval $[t_k, t_{k+1}]$ is given by   
\begin{equation} 
p(t) 
=  \exp\Big(   \tilde F(S(t-b) ) 
   \Big) p(t_k)\,, \hspace{5mm} t\in [t_k, t_{k+1}]\,,   \label{ndim3} 
\end{equation}
where the exponential of a matrix is in the usual sense: $e^A =\sum_{k=0}^\infty A^k/k!$, 
  the integral of a matrix is  
entry-wise.  Here  due to the noncommutativity  $ \tilde F(S(t-b))$ is complicated to determine
and we give the following formula for the sake of completeness:  
\begin{eqnarray}
 \tilde F(S(t-b))
 &=&\sum_{r=1}^\infty \sum_{\sigma\in P_r}\left(
\frac{(-1)^{e(\sigma)}}{r^2\left({r-1}\atop{e(\sigma)}\right)}\right) \int_{T_r(t)}  
\label{e.3.4a} \\
&&\quad  \times [[\cdots [F(S(u_{\sigma(1)}-b) F(S(u_{\sigma(2)}-b)]
\cdots] F(S(u_{\sigma(r)}-b)]du_1 \cdots du_r \nonumber
\end{eqnarray}
 is given by the Campbell-Baker-Hausdorff-Dynkin Formula
 (see e.g. \cite{hu}, \cite{St}),  where 
 $P_r$ is the set of all permutations of $\{1, 2, \cdots, r\}$,
 $e(\sigma)$ is the number of errors in ordering consecutive terms
 in $\{\sigma(1), \cdots, \sigma(r)\}$, $[AB]=AB-BA$ denotes the commutator
 of the matrices, and $T_r(t)=\left\{ 0\\
 <u_1<\cdots<u_r<t\right\}$. 
 
Analogously to  \cite{rpaper} we propose  the following logarithmic scheme to approximate 
the  solution:  
\begin{subequations} 
  \begin{empheq}[left=\empheqlbrace]{align }
X_i^{\pi}(t) 
 &=  X_i^{\pi}(t_k) \exp\Big( f_{ii}( S^{\pi}(t_k-b))
 (t-t_k) \nonumber\\&+\sum_{j=1}^d
 \ln{\Big(1 + g_{ij}(S^{\pi}(t_k-b))(Z_j(t)- Z_{j}(t_k)\Big)}  \Big)\,,   \label{e.3.6a}
\\
p^{\pi}(t ) 
 &=   \Big[ F(S^{\pi}(t_k-b)) )(t-t_k) + I \Big]p^{\pi}(t_k),     \label{e.3.6b}\\
 S^{\pi}_i(t) &=  p^{\pi}_i(t)X^{\pi}_i(t)\,,
\label{e.3.6c}\\ 
 X_i^\pi(0)&=\phi_i(0)\,,  \quad 
 p^\pi(0)={\bf 1}\,,\qquad  t_k\le t\le t_{k+1}\,, \hspace{7mm} k=1,2, \cdots, n-1\,. 
 \label{e.3.6d}
  \end{empheq}
\end{subequations}   

We introduce step processes 
 \begin{eqnarray*}
\begin{cases} v_{1}(t) = \sum_{k=0}^{\infty}\mathbbm{1}_{[t_k , t_{k+1}
)}(t)S^\pi (t_k )\\ 
 v_{2}(t) = \sum_{k=0}^{\infty}\mathbbm{1}_{[t_k , t_{k+1}
 )}(t)S^\pi (t_k -b ).
 \end{cases} 
 \end{eqnarray*} 
  Using the above step process we can write the continuous interpolation for $X_i$ as 
 \begin{eqnarray}\label{a24}
&& X^{\pi}_i(t) = \exp\Big(\displaystyle\int_0^t f_{ii}( v_2(u))du+\sum_{j=1}^d\sum_{0 \leq u \leq t, \De Z(u) \neq 0} \ln\Big(1+ g_{ij}(v_2(u))Y_{j,N_j(u)} \Big) \Big) \,. 
\end{eqnarray}
Denote $\lfloor t \rfloor=\max\{k, t_k<t\}$.  
From \eqref{e.3.6b} we have   
 \begin{eqnarray} 
p ^{\pi}(t) 
 &=& \Big[ \displaystyle\int_{t_{\lfloor t\rfloor}}^t F(v_2(u) )du + I \Big]\prod_{k=1}^ {\lfloor t\rfloor} \Big[ \displaystyle\int_{t_{k-1}}^{t_{k}}F(v_2(u))du  + I \Big]\,. 
  \label{ndim3} 
 \end{eqnarray}
%
We first show that $p^{\pi}(t_k)\geq 0$.
\begin{lemma}
If $\phi(0)\ge 0$ a.s.,  then  $p^\pi(t_k)\ge 0 $ a.s.
with $p^{\pi}(t) = \phi(t)$ for all $t \in [- b, 0]$. 
\end{lemma}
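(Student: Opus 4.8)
The plan is to prove positivity of the discrete process $p^\pi(t_k)$ by induction on $k$, exploiting the recursive structure in \eqref{e.3.6b}. The key observation is that the matrix $F(x)$ has zero diagonal and nonnegative off-diagonal entries by the hypothesis $f_{ij}(x)\ge 0$ for $i\ne j$ in Theorem \ref{t.2.1}. Consequently the one-step transition matrix $A_k := F(S^\pi(t_k-b))\,\Delta_k + I$ is entrywise nonnegative: its off-diagonal entries are $f_{ij}(S^\pi(t_k-b))\Delta_k\ge 0$, and its diagonal entries are exactly $1>0$. Since a product of entrywise-nonnegative matrices applied to an entrywise-nonnegative vector yields an entrywise-nonnegative vector, positivity should propagate cleanly.

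First I would fix the base case: by \eqref{e.3.6d} we have $p^\pi(0)=\mathbf 1\ge 0$, and for $t\in[-b,0]$ the convention $p^\pi(t)=\phi(t)$ together with the hypothesis $\phi(0)\ge 0$ handles the initial segment. Next I would record the inductive step. Assuming $p^\pi(t_k)\ge 0$ componentwise, I would write, directly from \eqref{e.3.6b},
\[
p^\pi(t_{k+1}) = \bigl[F(S^\pi(t_k-b))\,\Delta_k + I\bigr]\,p^\pi(t_k) = A_k\,p^\pi(t_k),
\]
and then verify that each component is nonnegative:
\[
p^\pi_i(t_{k+1}) = p^\pi_i(t_k) + \Delta_k\sum_{j\ne i} f_{ij}(S^\pi(t_k-b))\,p^\pi_j(t_k) \ge 0,
\]
where the first term is nonnegative by the induction hypothesis and each summand in the second term is a product of three nonnegative factors ($f_{ij}\ge 0$, $\Delta_k>0$, and $p^\pi_j(t_k)\ge 0$). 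This closes the induction.

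I do not expect any genuine obstacle here; the only point requiring a little care is the bookkeeping between the continuous interpolation in \eqref{ndim3} and the node values $p^\pi(t_k)$, since the statement is about the nodes. It suffices to note that the product representation in \eqref{ndim3} evaluated at a node $t=t_k$ reduces precisely to the iterated product $\prod_{\ell=1}^{k} A_{\ell-1}$ applied to $p^\pi(0)=\mathbf 1$, because the leading integral factor over $[t_{\lfloor t\rfloor},t]$ collapses to $I$ when $t$ is itself a node. Thus the node values are exactly the iterates built from the nonnegative matrices $A_k$, and the induction above applies verbatim. The genuinely delicate analytic work (establishing convergence with rate $0.5$) lies later; the present lemma is an elementary nonnegativity-preservation argument for a linear recursion with entrywise-nonnegative transition matrices.
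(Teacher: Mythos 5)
Your proposal is correct and follows essentially the same route as the paper: induction on $k$ using the recursion \eqref{e.3.6b}, with positivity propagated because the one-step matrix $F(S^\pi(t_k-b))\,\Delta_k + I$ has nonnegative entries (off-diagonals $f_{ij}\ge 0$ from Theorem \ref{t.2.1}, diagonal equal to $1$). Your write-up is in fact slightly more careful than the paper's, which loosely says ``all of its components are positive'' without separating the zero diagonal of $F$ from the identity's contribution, but the underlying argument is identical.
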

\begin{proof} This can be seen 
from \eqref{e.3.6b} and by induction.  Assume 
$p^\pi(t_k)\ge 0 $ a.s.   Since by 
our definition of $F(S^{\pi}(t_k-b))$ we know all
of its components are positive,
we see from \eqref{e.3.6b}  that 
$p^\pi(t )\ge 0 $ a.s.  for all $t_k\le t\le t_{k+1}$.
\end{proof}

Similarly we will have
\begin{lemma}
If $\phi(0)\ge 0$ a.s.,  then  $X^\pi(t)\ge 0 $ a.s. , hence $S^\pi(t)\ge 0$ a.s. for all $0\le t\le T$.  
\end{lemma}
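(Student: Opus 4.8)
The plan is to show $X^\pi(t) \ge 0$ almost surely by examining the explicit exponential representation \eqref{a24}, and then to deduce the nonnegativity of $S^\pi$ by combining this with the previous lemma via the product formula \eqref{e.3.6c}. The key observation is that the exponential function is always strictly positive, so the only thing I need to verify is that the argument of the exponential in \eqref{a24} is a well-defined real number, which in turn requires that each logarithm $\ln\bigl(1+g_{ij}(v_2(u))Y_{j,N_j(u)}\bigr)$ is applied to a strictly positive argument.

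First I would invoke the structural hypothesis carried over from Theorem \ref{t.2.1}, namely that there is a constant $\al_0$ with $g_{ij}(\cdot) > -1$ uniformly (the jumps live in $\JJ = [-R,\infty)$ and the coefficient bound guarantees $1 + g_{ij}(x)\,z > 0$ for all admissible jump sizes $z$). Concretely, the increments $Z_j(t) - Z_j(t_k)$ in \eqref{e.3.6a}, equivalently the jump contributions $g_{ij}(v_2(u))Y_{j,N_j(u)}$ in \eqref{a24}, satisfy $1 + g_{ij}(v_2(u))Y_{j,N_j(u)} > 0$ almost surely, because the negative jumps are bounded below by $-R$ and the assumed lower bound on $g_{ij}$ prevents the bracket from reaching zero. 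Hence each logarithm is finite and the entire sum inside the exponential in \eqref{a24} is a finite real random variable.

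Since $\exp$ maps $\RR$ into $(0,\infty)$, it follows immediately from \eqref{a24} that $X_i^\pi(t) > 0$, and in particular $X_i^\pi(t) \ge 0$, almost surely for every $i$ and every $t \in [0,T]$; the initial condition $X_i^\pi(0) = \phi_i(0) \ge 0$ from \eqref{e.3.6d} is consistent with this. Combining this with the preceding lemma, which gives $p_i^\pi(t) \ge 0$ almost surely, and with the defining relation $S_i^\pi(t) = p_i^\pi(t) X_i^\pi(t)$ in \eqref{e.3.6c}, the product of two nonnegative quantities yields $S_i^\pi(t) \ge 0$ almost surely for all $0 \le t \le T$, completing the argument.

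The step I expect to be the main obstacle is the positivity of the logarithm's argument: one must confirm that the lower bound $g_{ij} \ge \al_0 > -1$ interacts correctly with the supporting set $\JJ = [-R,\infty)$ of the jumps so that $1 + g_{ij}(v_2(u))Y_{j,N_j(u)}$ stays strictly positive for the actual jump sizes $Y_{j,N_j(u)}$ occurring in the scheme, rather than merely for the normalized coefficient. Everything else is a routine consequence of the positivity of the exponential and the already-established sign of $p^\pi$.
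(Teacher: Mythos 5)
Your argument is correct and is exactly the one the paper intends: the paper omits the proof entirely (stating only ``Similarly we will have''), and the implied reasoning is yours --- the representation \eqref{a24} (equivalently the recursion \eqref{e.3.6a} with $X_i^\pi(0)=\phi_i(0)\ge 0$) exhibits $X_i^\pi(t)$ as a nonnegative constant times an exponential whose argument is finite because the coefficient and jump-support bounds keep $1+g_{ij}(v_2(u))Y_{j,N_j(u)}$ strictly positive, and then $S_i^\pi=p_i^\pi X_i^\pi\ge 0$ by the preceding lemma. The only cosmetic point is that strict positivity $X_i^\pi(t)>0$ requires $\phi_i(0)>0$; if $\phi_i(0)=0$ one gets $X_i^\pi\equiv 0$, which still yields the claimed $X_i^\pi(t)\ge 0$.
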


To obtain the convergence of the logarithmic Euler–Maruyama scheme 
\eqref{e.3.6a}-\eqref{e.3.6d}, we make the following assumptions:
  \begin{enumerate}
 \item[{\bf (A1)}] The initial data $\phi_i(0)>0$ and it is H\"older continuous,  i.e.  there exist constant $\rho >0$ and $\gamma \in [1/2,1)$ such that for $t,s \in [-b,0]$
 \begin{eqnarray}
 |\phi_i(t) - \phi_i(s)| \leq \rho |t-s|^{\gamma}.\hspace{2mm} i=1,2, \cdots, d.
 \end{eqnarray}
%
%
%
 \item[{\bf (A2)}] $f_{ij}$ is bounded. $f_{ij}$ and $g_{ij}$ are  global Lipschitz for $i, j=1,2,\cdots,d$. This means that there exists a constant  $\rho>0$ such that
 \begin{eqnarray}
\begin{cases}\Big|g_{ij}(x_1)-g_{ij}(x_2)\Big| 
 \leq \rho |x_1 - x_2|\hspace{5mm}\forall \  x_1, x_2\in \RR^d\, \,;\\ 
 \Big|f_{ij}(x_1)-f_{ij}(x_2)\Big| 
 \leq \rho  |x_1 - x_2|\,\,,\quad \forall \ x_1, x_2\in \RR^d\,;
 \nonumber \\
  \big|f_{ij}(x)\big|  \leq \rho  \,,\quad \forall x\in \RR^d.
 \end{cases} \label{pema102}
 \end{eqnarray} 
 \item[{\bf (A3)}] The support  $\JJ $ of the Poisson random measure $N_j$ (associated with $Z$) is contained in $[-R, \infty)$ for each $j=1,2,\cdots,d$ for some $R>0$ and   there are   
constants  $\al_0>1$  and $\rho>0$ satisfying 
$-\rho\le g_{ij}(x) \le \frac{\al_0}{R} $ for all   $ x\in \RR^d$ and for all $i,j=1,2, \cdots,d$. 
 \item[{\bf (A4)}] For any $q>1$  there is a $\rho_q>0$
 \begin{eqnarray}&&
 \displaystyle\int_{\JJ}(1+ |z |)^q\nu_i(dz) \leq \rho_q \,,  \hspace{3mm}\forall x  \in \mathbb{R}\,. \hspace{2mm} i=1,2, \cdots,d. \label{pema103}
 \end{eqnarray}
 \end{enumerate} 
\begin{lemma}{\label{a19}}
Let   Assumptions (A1)--(A4) be satisfied.   Then,   for any $q\ge 1$, there exists $K_q$, independent of the partition $\pi$, 
 such that
 \begin{eqnarray*}
 \mathbb{E}\Big[\sup_{1\le i\le d} \sup_{0 \leq t \leq T}|X_i(t)|^q \Big] \vee \mathbb{E}\Big[\sup_{1\le i\le d}\sup_{0 \leq t \leq T}|X_i^\pi(t) |^q \Big]
 \leq K_q.
 \end{eqnarray*} 
\end{lemma}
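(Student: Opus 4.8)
The estimate rests on a single structural observation: by Assumptions (A2) and (A3) the coefficients are \emph{uniformly bounded}, $|f_{ii}|\le\rho$ and $|g_{ij}|\le\beta:=\max(\rho,\alpha_0/R)$, so that even though $X_i$ and $X_i^\pi$ are driven by the delayed quantity $S(t-b)=p(t-b)X(t-b)$, the domination below will involve only the constant $\beta$ and never the actual size of $S(t-b)$. Because each driver $Z_j$ is a compound Poisson process, it has finitely many jumps on $[0,T]$ and the stochastic exponential solving \eqref{e.3.1a} is the explicit product
\[
X_i(t)=\phi_i(0)\,\exp\!\Big(\int_0^t f_{ii}(S(s-b))\,ds\Big)\prod_{j=1}^d\prod_{0\le u\le t,\ \Delta Z_j(u)\ne0}\big(1+g_{ij}(S(u-b))\,\Delta Z_j(u)\big),
\]
while the continuous interpolation \eqref{a24} of the scheme has exactly the same form with $S(\cdot-b)$ replaced by the piecewise-frozen process $v_2$. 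All factors are nonnegative by the positivity established above.

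\textbf{Pathwise domination.} The plan is to bound both processes by one partition-independent random variable. First, $\exp(q\int_0^t f_{ii}\,ds)\le e^{q\rho T}$. Next, since a product of nonnegative factors satisfies $\prod_{u\le t}a_u\le\prod_{u\le T}\max(a_u,1)$ and $\max(1+g_{ij}\Delta Z_j,1)\le 1+\beta|\Delta Z_j|$, I obtain, uniformly in $t\in[0,T]$ and in the partition $\pi$,
\[
\sup_{0\le t\le T}\big(X_i(t)\big)^q\ \vee\ \sup_{0\le t\le T}\big(X_i^\pi(t)\big)^q\ \le\ \phi_i(0)^q\,e^{q\rho T}\prod_{j=1}^d\prod_{0\le u\le T,\ \Delta Z_j(u)\ne0}\big(1+\beta|\Delta Z_j(u)|\big)^q=:\Theta_q.
\]
The crucial gain is that the right-hand side no longer depends on $\pi$ (nor on the values of $S$), because the frozen coefficients have been discarded in favour of the constant $\beta$.

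\textbf{Taking expectations.} It remains to show $\mathbb E[\Theta_q]<\infty$ with a bound independent of $\pi$. Using the independence of the $Z_j$ and the elementary exponential formula for a compound Poisson process,
\[
\mathbb E\Big[\prod_{0\le u\le T,\ \Delta Z_j(u)\ne0}\big(1+\beta|\Delta Z_j(u)|\big)^q\Big]=\exp\!\Big(T\int_{\JJ}\big((1+\beta|z|)^q-1\big)\nu_j(dz)\Big),
\]
and by Assumption (A4) the inner integral is finite and bounded by a constant $C_q$ (since $(1+\beta|z|)^q\le\max(1,\beta)^q(1+|z|)^q$). Hence $\mathbb E[\Theta_q]\le\phi_i(0)^q e^{q\rho T}e^{dTC_q}$, and summing over the finitely many components $i=1,\dots,d$ yields the asserted bound with $K_q=d\max_i\phi_i(0)^q e^{q\rho T}e^{dTC_q}$, independent of $\pi$.

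\textbf{Main obstacle.} Conceptually the whole argument hinges on the domination step, that is, on realizing that the non-monotone jump product can be bounded by the full product $\prod_{u\le T}(1+\beta|\Delta Z_j(u)|)$ over \emph{all} jumps in $[0,T]$; this is the device that simultaneously removes the dependence on the (a priori unbounded) delayed state $S(t-b)$ and on the mesh of $\pi$. Technically the only nontrivial input is the integrability assumption (A4), which guarantees $\int_{\JJ}(1+\beta|z|)^q\nu_j(dz)<\infty$ and hence $\mathbb E[\Theta_q]<\infty$. Had $f_{ii}$ or $g_{ij}$ failed to be bounded, the clean domination would collapse and one would instead be forced into a Gronwall/Burkholder--Davis--Gundy argument coupling the moments of $X$, $p$ and $S$ simultaneously, which is considerably more delicate.
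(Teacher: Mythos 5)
Your proof is correct, but it takes a genuinely different route from the paper's. The paper also starts from the exponential representation \eqref{a24}, but it then rewrites the jump sum as a Poisson stochastic integral, introduces an auxiliary integrand $h_j$, splits off the compensator, and controls the supremum through the exponential martingales $M_{k,t}$ combined with a Cauchy--Schwarz step, with Assumption (A4) and the boundedness of $g_{ij}$ used to bound the compensator term. You instead dominate pathwise: since each $Z_j(t)=\sum_{k=1}^{N_j(t)}Y_{j,k}$ is compound Poisson by hypothesis (iv), $X_i$ and $X_i^\pi$ are finite products of nonnegative factors; each factor is at most $1+\beta|\Delta Z_j(u)|$ with $\beta=\max(\rho,\alpha_0/R)$ by (A2)--(A3), so both processes are bounded, uniformly in $t$ and in $\pi$, by the single random variable $\Theta_q$ built from \emph{all} jumps on $[0,T]$, whose expectation is computed exactly by the compound-Poisson exponential formula and is finite by (A4). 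What each approach buys: yours is more elementary and transparent --- it avoids martingale theory entirely, gives explicit constants, and sidesteps the integrability of $\sup_{0\le t\le T}M_{1,t}$ that the paper's final Cauchy--Schwarz step quietly requires (Doob's $L^p$ maximal inequality is needed there but never invoked); on the other hand, your argument leans crucially on the finite-activity structure $\nu_j(\JJ)<\infty$, since for infinite-activity L\'evy noise the product $\prod_{0\le u\le T}\bigl(1+\beta|\Delta Z_j(u)|\bigr)$ need not converge and one would be forced back to the paper's compensated-martingale argument. Within the setting of this paper your proof is complete and valid.
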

\begin{proof} From our definition of $X_i^\pi$ and boundedness of $f_{ij}$ for all $i,j$ we have 
\begin{eqnarray} 
 \mathbb{E}\Big[\sup_{0 \leq t \leq T}|X_i^\pi(t) |^q \Big] 
 \nonumber 
 &= &
 \mathbb{E}\Big[\sup_{0 \leq t \leq T} \exp\Big( q\displaystyle\int_0^t f_{ii}( v_2(u))du+q\sum_{j=1}^d\sum_{0\leq u\leq t, \Delta Z(u)\neq 0}\ln(1+g_{ij}(v_2(u)) Y_{j,N_j(u)}) \Big)\Big]
 \nonumber\\
 & = &  \mathbb{E}\Big[\sup_{0 \leq t \leq T}\exp\Big(q\displaystyle\int_0^t f_{ii}( v_2(u))du +q\sum_{j=1}^d\displaystyle\int_{ \mathbb{T} }\ln(1+z_jg_{ij}(v_2(u)))N_j(du,dz)\Big)\Big]\nonumber \\
& \leq &  K\mathbb{E}\Big[\sup_{0 \leq t \leq T}\exp\Big(q\sum_{j=1}^d\displaystyle\int_{ \mathbb{T} }\ln(1+z_jg_{ij}(v_2(u)))N_j(du,dz)\Big)\Big]\nonumber 
 \\ &=:&KI\,,\label{a18}
 \end{eqnarray}
where 
 $\TT=[0, t]\times \JJ$.   
Denote  $h_j = ((1+z_jg_{i,j}(v_2(u))^{2q}-1))/z_j $.   Then,     
 \begin{eqnarray*}
 I&=&
 \mathbb{E}\Big[\sup_{0 \leq t \leq T}\exp\Big(\frac12\sum_{j=1}^d \displaystyle\int_{ \mathbb{T}_t }\ln(1+z_jh_j)N_j(du,dz_j) \Big)\Big] \\
 &=&
 \mathbb{E}\Big[\sup_{0 \leq t \leq T}\exp\Big(\sum_{j=1}^d\Big(\frac12 \displaystyle\int_{ \mathbb{T}_t }\ln(1+z_jh_j)\tilde N_j(du,dz_j) 
 +\frac12 \int_{ \mathbb{T}_t }\ln(1+z_jh_j)\nu_j(dz_j) du 
 \Big)\Big)\Big] \\
 &=&
 \mathbb{E}\Big[\sup_{0 \leq t \leq T}\exp\Big(\sum_{j=1}^d\Big(\frac12 \displaystyle\int_{ \mathbb{T}_t }\ln(1+z_jh_j)\tilde N_j(du,dz_j) 
 +\frac12 \int_{ \mathbb{T}_t }\left[ \ln(1+z_jh_j)-z_jh_j\right] 
 \nu_j(dz_j) du 
 \Big)\Big)\Big] \\
 &&\qquad \sup_{0 \leq t \leq T}\exp\Big(\sum_{j=1}^d 
 -\frac12 \int_{ \mathbb{T}_t } (1+z_jg_{ij}(v_2(u))^{2q}-1)\ 
 \nu_j(dz_j) du 
 \Big)\Big] \\
 &\le &C_q
 \mathbb{E}\Big[\sup_{0 \leq t \leq T}\exp\Big(\sum_{j=1}^d\Big(\frac12 \displaystyle\int_{ \mathbb{T}_t }\ln(1+z_jh_j)\tilde N_j(du,dz_j) 
 +\frac12 \int_{ \mathbb{T}_t }\left[ \ln(1+z_jh_j)-z_jh_j\right] 
 \nu_j(dz_j) du 
 \Big)\Big)\Big] \,, 
 \end{eqnarray*}
where we used   Assumption (A4)  and the  boundedness of $g_{ij}$.
Write for $k=1,2, \cdots,d$
 \[
 M_{k,t}:=\exp\Big( \displaystyle\int_{ \mathbb{T}_t }\ln(1+z_kh_k)\tilde N_k(du,dz_k) 
 + \int_{ \mathbb{T}_t }\left[ \ln(1+z_kh_k)-z_kh_k\right] 
 \nu_k(dz_k) du 
 \Big)
 \,.
 \]
Then   $(M_{k,t}, 0\le t\le T)$ is an exponential martingale.
Now an application of the Cauchy--Schwartz inequality 
yield  
\begin{eqnarray*}
 I
 &\le & C_q
 \bigg\{\mathbb{E}\Big[\sup_{0 \leq t \leq T}M_{1,t} \Big]\bigg\}^{d/2} \,, 
 \end{eqnarray*} 
 which proves 
$$\mathbb{E}\Big[\sup_{0 \leq t \leq T}|X_i^\pi(t) |^q \Big] \le
K_q<\infty.$$   In the same way,  we can show 
$\mathbb{E}\Big[\sup_{0 \leq t \leq T}|X_i (t) |^q \Big] \le
K_q<\infty$.   This completes the proof of the lemma.   
\end{proof}
\begin{lemma} \label{a26}
Assume Assumptions (A1)--(A4).
Then for $\De<1$,   there is a constant $K>0$,
independent of $\pi$, such that
 \begin{eqnarray*}
 \mathbb{E}
  \sup_{0\le t\le T} \Big|S^\pi(t)- v_2(t) \Big|^{p} \leq K \Delta^{p/2} \, .   
 \end{eqnarray*} 
 \end{lemma}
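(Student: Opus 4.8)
The plan is to exploit the product structure $S_i^\pi(t)=p_i^\pi(t)X_i^\pi(t)$ from \eqref{e.3.6c} and reduce everything to a one-step increment estimate. Since $v_2$ is piecewise constant and equal to $S^\pi(t_k-b)$ on $[t_k,t_{k+1})$, the quantity to be controlled is, up to the harmless time-shift by the delay $b$, the deviation of $S^\pi$ from its value at the left mesh point; it therefore suffices to estimate $\mathbb{E}\sup_{0\le t\le T}|S^\pi(t)-S^\pi(t_{\lfloor t\rfloor})|^p$ together with its delayed analogue, which is handled identically. On each subinterval I would write, for $t\in[t_k,t_{k+1}]$,
\[
S_i^\pi(t) - S_i^\pi(t_k) = p_i^\pi(t)\bigl(X_i^\pi(t) - X_i^\pi(t_k)\bigr) + X_i^\pi(t_k)\bigl(p_i^\pi(t) - p_i^\pi(t_k)\bigr),
\]
and bound the two terms separately, using throughout the uniform moment bound of Lemma \ref{a19} for $X^\pi$ and the fact that, $f_{ij}$ being bounded by (A2), the matrix $F$ is bounded, so that iterating \eqref{e.3.6b} gives the deterministic bound $|p^\pi(t)|\le\prod_k(1+C\Delta_k)\le e^{CT}$.

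The $p$-factor term is the mild one. From \eqref{e.3.6b}, $p^\pi(t)-p^\pi(t_k)=F(S^\pi(t_k-b))(t-t_k)\,p^\pi(t_k)$, whence $|p^\pi(t)-p^\pi(t_k)|\le C\Delta$ deterministically, and together with the $L^p$ bound for $X_i^\pi(t_k)$ this term contributes $O(\Delta^p)\le O(\Delta^{p/2})$, negligible for the stated rate. For the $X$-factor term I would use the explicit representation $X_i^\pi(t)=X_i^\pi(t_k)e^{A_k(t)}$, where $A_k(t)$ is the exponent appearing in \eqref{e.3.6a}, so that $X_i^\pi(t)-X_i^\pi(t_k)=X_i^\pi(t_k)(e^{A_k(t)}-1)$. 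Conditioning on $\mathcal{F}_{t_k}$, on which $X_i^\pi(t_k)$ and the frozen coefficients are measurable while the increments $Z_j(t)-Z_j(t_k)$ are independent, I would split the jump sum into its compensated part plus compensator and apply Kunita's inequality, which produces a quadratic-variation term of order $(\int_{t_k}^t\!\int_\JJ|\cdot|^2\nu_j\,ds)^{p/2}=O(\Delta^{p/2})$ and a pure-jump term of order $\int_{t_k}^t\!\int_\JJ|\cdot|^p\nu_j\,ds=O(\Delta)$, the drift and compensator contributing $O(\Delta^p)$. Multiplying by $\mathbb{E}|X_i^\pi(t_k)|^p$ and invoking Lemma \ref{a19} closes the per-step estimate.

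I expect the main obstacle to be the passage to the supremum over all of $[0,T]$ in the presence of jumps: the estimates above are pointwise in $t$, whereas $\sup_t|S^\pi(t)-v_2(t)|$ a priori feels the largest jump on $[0,T]$, and the pure-jump contribution $\int_\JJ|\cdot|^p\nu_j$ in Kunita's inequality scales only like $\Delta$ rather than $\Delta^{p/2}$. The route I would take is to apply Kunita's maximal inequality to the exponential martingale already isolated in the proof of Lemma \ref{a19}, converting the pathwise supremum into the same quadratic-variation bound, and to use Assumptions (A3)--(A4), namely the uniform bound on $g_{ij}$ and the availability of all $\nu_j$-moments, together with the hypothesis $\Delta<b$, which guarantees that the delayed coefficient is genuinely frozen over each step, in order to keep every constant independent of $\pi$. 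Organising the two Kunita terms so that the $\Delta$-term is absorbed into the $\Delta^{p/2}$-term, which is exactly the regime $p=2$ in which the estimate is ultimately used for the rate-$0.5$ claim, is the delicate bookkeeping on which the argument hinges.
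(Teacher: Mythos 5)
Your proposal follows essentially the same route as the paper's own proof: the same product decomposition of $S_i^\pi(t)-S_i^\pi(t_{\lfloor t\rfloor})$ into a $p^\pi$-increment times $X^\pi$ plus an $X^\pi$-increment times $p^\pi$, the same deterministic $O(\Delta)$ bound on $p^\pi(t)-p^\pi(t_{\lfloor t\rfloor})$ from \eqref{e.3.6b} and (A2), the same appeal to Lemma \ref{a19} for uniform moments, and a martingale inequality applied to the compensated jump integral coming from the exponent in \eqref{e.3.6a}. Like the paper, you also (correctly) read the statement as an increment estimate, the delay shift being harmless.

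The gap is exactly at the point you flag as ``delicate bookkeeping'', and it cannot be closed by bookkeeping. Per mesh interval, Kunita's inequality gives, for $q\ge 2$, a bound of the form $C(\Delta^{q/2}+\Delta)$; since $\Delta<1$, the pure-jump term $\Delta$ \emph{dominates} $\Delta^{q/2}$, so the one-step estimate is $O(\Delta)$, not $O(\Delta^{q/2})$, and at $q=2$ the two terms merely coincide rather than one absorbing the other. The supremum over $[0,T]$ then ranges over roughly $T/\Delta$ intervals, and the only generic passage from per-interval bounds to this supremum is a union bound, which multiplies $O(\Delta)$ by $T/\Delta$ and yields $O(1)$. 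Your two remedies do not avoid this: Kunita's maximal inequality applied to the exponential martingale of Lemma \ref{a19} controls $\sup_t$ of a single martingale on $[0,T]$ (a quantity of order one), not the maximum over $k$ of the increments $X^\pi(t)-X^\pi(t_k)$; and restricting to $p=2$ changes nothing, because the loss occurs in the union bound, not in the exponent (moreover the paper's Theorem invokes the lemma with exponents $2p$ and $4p$, so $p=2$ alone would not suffice). In fact no argument can give the stated bound for compound Poisson noise: take $d=1$, $f\equiv 0$, $g\equiv 1$, $\phi\equiv 1$, and $Z$ a standard Poisson process, so that $S^\pi(t)=X^\pi(t)=2^{N(t)}$ and (A1)--(A4) hold. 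If $\tau$ is the first jump time, then on $\{\tau\le T\}$ one has $\big|S^\pi(\tau)-S^\pi(t_{\lfloor\tau\rfloor})\big|=1$, whence $\mathbb{E}\sup_{0\le t\le T}\big|S^\pi(t)-v(t)\big|^p\ge 1-e^{-T}$ for every partition, so the left-hand side does not tend to $0$ with $\Delta$. For fairness, the paper's own proof does not overcome this either: at the corresponding step \eqref{a22} it invokes BDG in the form $\mathbb{E}\sup_t\big|\int\tilde N\big|^{2p}\le\mathbb{E}\sup_t\big(\int|\cdot|^2\nu\,ds\big)^{p}$, which silently replaces the realized quadratic variation (a sum of squared jumps of size $O(1)$) by its compensator and ignores the maximum over mesh intervals. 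So your instinct about where the difficulty lies is sound --- it is a defect of the statement itself (with the supremum inside the expectation), not of your execution; a correct version must either move the supremum outside the expectation, or accept the rate $\Delta^{1/p}$ per interval, or impose smallness of the jumps.
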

 \begin{proof}
Let $\lfloor t\rfloor =t_k
$ if $t \in [t_k,t_{k+1})$ for some $k $. We have $v_2=(v_{21},v_{22},\cdots,v_{2d})$ for which we write in short $v_2=(\bar{v}_{1},\bar{v}_{2},\cdots,\bar{v}_{d})$. For any $i=1, \cdots, d$,  \begin{eqnarray}{\label{a21}}
 &&\mathbb{E}\sup_{0\le t\le T} \Big| S_i^{\pi}(t) -\bar{v}_i(t)\Big|^p
 = \mathbb{E}
 \sup_{0\le t\le T} \Big|p_i^\pi(t)X_i^\pi(t)-p_i^\pi(\lbar{t})X_i^\pi(\lbar{t})\Big|^p\ \nonumber\\
 &&= \mathbb{E}
 \sup_{0\le t\le T} \Big|p_i^\pi(t)X_i^\pi(t)-p_i^\pi(\lbar{t})X_i^\pi(t)+p_i^\pi(\lbar{t})X_i^\pi(t)-p_i^\pi(\lbar{t})X_i^\pi(\lbar{t})\Big|^p\nonumber\\
 && \leq C \Big(\mathbb{E}\sup_{0\le t\le T} \Big|p_i^\pi(t)-p_i^\pi(\lbar{t})\Big|^{2p}\Big)^{1/2}\Big(\mathbb{E}\sup_{0\le t\le T} \Big|X_i^\pi(t)\Big|^{2p}\Big)^{1/2}\\
 &&\qquad \quad +C\Big(\mathbb{E}\sup_{0\le t\le T}\Big|X_i^\pi(t)-X_i^\pi(\lbar{t})\Big|^{2p}\Big)^{1/2}\Big(\mathbb{E}\sup_{0\le t\le T}\Big|p_i^\pi(\lbar{t})\Big|^{2p}\Big)^{1/2}.\nonumber
\\
&& \label{a33}\end{eqnarray}
By Assumption 2 we can bound $\mathbb{E}\sup_{0\le t\le T}\Big|p_i^\pi(\lbar{t})\Big|^{2p}$ and by lemma \eqref{a19} we can bound $\mathbb{E}\sup_{0\le t\le T}\Big|X_i^\pi(t)\Big|^{2p}$.
We now bound the other two components.
\begin{eqnarray}
&& \mathbb{E}
\sup_{0\le t\le T} \Big|p_i^\pi(t)-p_i^\pi(\lbar{t})\Big|^{2p} \leq \sum_{\substack{j,  j\neq i}}^d\mathbb{E}\sup_{0\le t\le T} \Big|\int_{\lbar{t} }^tf_{ij}(v_2(u))du\Big|^{2p}.
\end{eqnarray}
By Assumption 2 it is easy to see that  for some constant $C_1$ \begin{eqnarray}
&& \mathbb{E}\sup_{0\le t\le T} \Big|p_i^\pi(t)-p_i^\pi(\lbar{t} )\Big|^{2p} \leq C_1 \De^{2p}. \label{a34}
\end{eqnarray}
For $\mathbb{E}\sup_{0\leq t\leq T}\Big|X_i^\pi(t)-X_i^\pi(\lbar{t})\Big|^{2p}$ we use the expression for $X_i^\pi(t)$, boundedness of $f_{ij}$ for all $i,j$ and 
use $|e^x-e^y|\leq |e^x+e^y||x-y|$  to obtain 
\begin{eqnarray}
 \mathbb{E}
 \sup_{0\le t\le T}\Big|X_i^\pi(t)-X_i^\pi(\lbar{t})\Big|^{2p} &\leq& \left\{\EE \sup_{0\le t\le T} \Big| X_i^\pi(t)+X_i^\pi(\lbar{t} ) \Big|^{2p}\right \}^{1/2}\nonumber \\
 &&\cdot K \left\{\EE \sup_{0\le t\le T}\left[ \Big| \sum_{j=1}^d\sum_{\lbar{t}\le s<t}\ln(1+g_{ij}(v_2(s))Y_{j,N(s)} )\Big|\right]^{2p}\right \}^{1/2}. \nonumber
\end{eqnarray}
%
The first factor is bounded and now, we want to bound the second factor: 
\[
I:=\EE\sup_{0\le t\le T} \left| \sum_{j=1}^d\sum_{\lbar{t} \leq s \leq t}\ln(1+g_{i,j}(v_2(s))Y_{j,N_j(s)}) \right|^{2p} \,.
\]
(We use the same 
notation $I$ to denote different quantities in different 
occasions and this does  not cause ambiguity).   
We write the above sum as an integral: 
 \begin{eqnarray*}
I
&=&\mathbb{E}\sup_{0\le t\le T} \Big|\sum_{j=1}^d\displaystyle\int_{\JJ }\displaystyle\int_{\lbar{t} } ^t\ln(1+z_jg_{ij}(v_2(s))) {N_j}(ds,dz_j)\Big|^{2p}\\ 
&=&\mathbb{E}\sup_{0\le t\le T}\Big|\sum_{j=1}^d\displaystyle\int_{\JJ }\displaystyle\int_{\lbar{t}} ^t\ln(1+z_jg_{ij}(v_2(s)))\tilde{N_j}(ds,dz_j)\\
&&+\sum_{j=1}^d\displaystyle\int_{\JJ }\displaystyle\int_{\lbar{t}} ^t\ln(1+z_jg_{ij}(v_{2}(s)))\nu_j(dz_j)ds\Big|^{2p}\\
&\le& C_p \left(\De^{2p} + \mathbb{E}\sup_{0\le t\le T} \Big|\displaystyle\int_{\JJ }\displaystyle\int_{\lbar{t} } ^t\ln(1+z_jg_{ij}(v_2(s)))\tilde{N_j}(ds,dz_j) \Big|^{2p}\right)\,.   
 \end{eqnarray*} 
 By the Burkholder--Davis--Gundy inequality, we have 
\begin{eqnarray}
&& \mathbb{E}\sup_{0\le t\le T} \Big|\displaystyle\int_{\JJ }\displaystyle\int_{\lbar{t}} ^t\ln(1+z_jg_{ij}(v_2(s)))\tilde{N_j}(ds,dz_j) \Big|^{2p} \nonumber\\
 & &\qquad \quad \le \mathbb{E}\sup_{0\le t\le T} \left(\displaystyle \int_{\JJ } \int_{\lbar{t}} ^t
 \Big| \ln(1+z_jg_{ij}(v_2(s))) \Big|^2 \nu_j(dz_j)ds \right)^p\nonumber\\
 & &\qquad \quad \le K_p \De^p\,. \label{a22}
 \end{eqnarray}  
Plugging above, \eqref{a34}, in \eqref{a33} we   get for some $K,K_1,K_2>0$
\begin{eqnarray}
&& \mathbb{E}\sup_{0\le t\le T}\Big| S_i^{\pi}(t) -v_i(t)\Big|^p \leq K_1 \De^p+K_2\De^{p/2} \leq K\De^{p/2}.
\end{eqnarray}
This proves the lemma.
\end{proof}
\begin{theorem}
Assume that Assumptions (A1)–(A4) are true. 
Then, there is a constant $K_{pd,T}$, independent of $\pi$ such that
\begin{eqnarray}
&& \mathbb{E}\Bigg[\sup_{0 \leq t \leq T}\Big[|S(t) - S^{\pi}(t)|^p\Big]\Bigg] \leq K_{pd,T}\De^{p/2}.
\nonumber\\ \end{eqnarray}
\end{theorem}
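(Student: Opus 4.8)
The plan is to exploit the multiplicative decompositions $S_i=p_iX_i$ and $S_i^\pi=p_i^\pi X_i^\pi$ from \eqref{e.3.1c} and \eqref{e.3.6c}, so that the global error splits into a contribution from the deterministic matrix--flow factor $p$ and one from the exponential, jump-driven factor $X$. Writing
\[
|S_i(t)-S_i^\pi(t)|\le |p_i(t)|\,|X_i(t)-X_i^\pi(t)|+|X_i^\pi(t)|\,|p_i(t)-p_i^\pi(t)|,
\]
raising to the power $p$, taking $\sup_{s\le t}$ and expectations, and applying the Cauchy--Schwarz inequality, the boundedness of $p_i,p_i^\pi$ (immediate from the boundedness of $F$ in (A2), via a continuous Gronwall bound for $p$ and a product bound for the linear recursion \eqref{e.3.6b}), and the uniform moment bound of Lemma \ref{a19} for $X_i^\pi$, I reduce the theorem to controlling $\EE\sup_{s\le t}|X_i(s)-X_i^\pi(s)|^p$ and $\big(\EE\sup_{s\le t}|p_i(s)-p_i^\pi(s)|^{2p}\big)^{1/2}$, each of which I aim to bound by $K\De^{p/2}+K\int_0^t D(s)\,ds$, where $D(s):=\EE\sup_{r\le s}|S(r-b)-v_2(r)|^{p}$ is the discrepancy of the delayed coefficient arguments. (Here $|\cdot|$ is the Euclidean norm, comparable to $\max_i|\cdot|$ up to a dimensional constant absorbed into $K_{pd,T}$; since Lemma \ref{a19} and (A4) provide moment bounds of every order, all exponents arising from Hölder's inequality can be matched.)

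The crucial structural point is that in both \eqref{e.3.1a}--\eqref{e.3.1b} and the scheme \eqref{e.3.6a}--\eqref{e.3.6b} every coefficient is evaluated at a delayed argument ($S(\cdot-b)$ for the exact dynamics, $v_2(\cdot)=S^\pi(\lbar{\cdot}-b)$ for the scheme), so the errors are driven solely by $D$ and never by the current-time error $\EE\sup_{s\le t}|S(s)-S^\pi(s)|^p$. For the $X$-factor I would use the explicit exponential representation \eqref{a24} of $X_i^\pi$ and the analogous one for $X_i$, together with the elementary inequality $|e^a-e^b|\le(e^a+e^b)|a-b|$, Cauchy--Schwarz and Lemma \ref{a19}, to reduce to the difference $A_i-A_i^\pi$ of the two exponents. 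This difference is controlled, via the global Lipschitz property (A2), the moment bound (A4), and Assumption (A3) (which keeps the arguments of the logarithms in a compact subset of $(0,\infty)$, so $x\mapsto\ln(1+zx)$ is uniformly Lipschitz on $\JJ$), by a drift term of order $\int_0^t|S(s-b)-v_2(s)|\,ds$ plus a compensated-Poisson martingale whose moments are estimated by Burkholder--Davis--Gundy exactly as in the proof of Lemma \ref{a26}; this is where the rate $\De^{1/2}$ is generated. The $p$-factor is handled by comparing the exact matrix flow $\exp(\tilde F)$ of \eqref{e.3.4a} with the first-order Euler step $I+F(v_2)(t-t_k)$ in \eqref{e.3.6b}: a local truncation estimate of order $\De_k^{3/2}$ per step, summed over the $O(1/\De)$ steps and propagated by a discrete Gronwall argument, yields a global error of order $\De^{1/2}$, plus the coefficient-perturbation term $\int_0^t D(s)\,ds$.

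It remains to close the recursion through the delay. Splitting
\[
|S(s-b)-v_2(s)|\le |S(s-b)-S^\pi(s-b)|+|S^\pi(s-b)-S^\pi(\lbar{s}-b)|,
\]
the second term is precisely a freezing error of the numerical solution and is bounded in $L^p(\sup)$ by $K\De^{p/2}$ by a time-shifted application of Lemma \ref{a26}, while the first term contributes the delayed error $\theta((s-b)^+)$, where $\theta(t):=\EE\sup_{0\le s\le t}\sup_{1\le i\le d}|S_i(s)-S_i^\pi(s)|^p$. Hence $D(s)\le K\theta((s-b)^+)+K\De^{p/2}$, and the estimates above combine into the delay inequality
\[
\theta(t)\le K\De^{p/2}+K\int_0^t\theta((s-b)^+)\,ds.
\]
I would then conclude by induction over the windows $[0,b],[b,2b],\dots$: on $[0,b]$ one has $\theta((s-b)^+)=\theta(0)=0$ because $S=S^\pi=\phi$ on $[-b,0]$ (the only discrepancy there, the freezing of $\phi$, is already inside the $\De^{p/2}$ term thanks to the Hölder exponent $\ga\ge 1/2$ in (A1)), so $\theta(t)\le K\De^{p/2}$; feeding this bound into the next window and iterating $\lceil T/b\rceil$ times accumulates a constant $K_{pd,T}$ and gives $\theta(T)\le K_{pd,T}\De^{p/2}$, which is the claim.

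I expect the main obstacle to be the $p$-factor estimate. Because of the noncommutativity encoded in the Campbell--Baker--Hausdorff--Dynkin series \eqref{e.3.4a}, one must verify that the explicit Euler step $I+F(v_2)(t-t_k)$ really is a first-order approximation of $\exp(\tilde F)$, and that the per-step errors, although compounded multiplicatively across the $O(1/\De)$ steps, still sum to order $\De^{1/2}$ after accounting for the $\De^{1/2}$-variation of $S(\cdot-b)$ within each step. By contrast, the stochastic exponent difference for $X$ is a direct adaptation of the Burkholder--Davis--Gundy computation already carried out in Lemma \ref{a26}, and the delay induction itself is elementary once the two component estimates are in place.
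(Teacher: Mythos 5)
Your proposal follows essentially the same route as the paper's own proof: the same multiplicative decomposition $S=pX$, $S^\pi=p^\pi X^\pi$; the same treatment of the $X$-error via the exponential representation, the inequality $|e^a-e^b|\le(e^a+e^b)|a-b|$, Cauchy--Schwarz, Burkholder--Davis--Gundy and Lemma \ref{a19}; the same comparison of the exact flow $\exp(\tilde F)$ from the Campbell--Baker--Hausdorff--Dynkin formula \eqref{e.3.4a} with the one-step factor $I+F(\cdot)(t-t_k)$ (per-step error $O(\De^{3/2})$, compounded over $O(1/\De)$ steps through products bounded by $e^{C\De_k}$); the same use of Lemma \ref{a26} for the freezing error; and the same induction over the delay windows $[0,b],[b,2b],\dots$, started by the fact that $S=S^\pi=\phi$ on $[-b,0]$.

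The one place where your write-up deviates from what your own estimates would produce is the closing inequality $\theta(t)\le K\De^{p/2}+K\int_0^t\theta((s-b)^+)\,ds$, which is linear in $\theta$ at the \emph{same} exponent $p$. You cannot obtain this from the Cauchy--Schwarz steps you describe: since $X$ and $X^\pi$ are unbounded random variables (only their moments are controlled by Lemma \ref{a19}), every H\"older splitting of products such as $|X^\pi|\,|p-p^\pi|$ or $(e^{A}+e^{A^\pi})|A-A^\pi|$ necessarily places an exponent strictly larger than $p$ on the error factor; what actually comes out is of the form $\theta_p(r)\le K\De^{p/2}+C\bigl(\theta_{2p}((r-b)^+)\bigr)^{1/2}$, where $\theta_q$ denotes the error functional with exponent $q$ (and no time integral is even needed, since the delay makes all coefficients depend only on the previous window). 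This does not sink your plan: the window induction still closes because the statement is proved for all $p\ge 2$ simultaneously and only $\lceil T/b\rceil$ windows are needed, so one runs the argument with exponents $p,2p,4p,\dots$ backwards through the windows --- this is exactly what the paper does in \eqref{a27}--\eqref{a30} and the iteration following them, and your remark that ``all exponents arising from H\"older's inequality can be matched'' implicitly accommodates it. But the recursion must be stated with the doubled exponent under a square root; as literally written, your same-exponent Gronwall step would fail.
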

\begin{proof}
First, we we want to bound 
\begin{eqnarray}
I_1:= \mathbb{E}\Big(\sup_{0 \leq t \leq r}|p(t) - p^{\pi}(t)|^p\Big) \,. 
\end{eqnarray}
From \eqref{e.3.4a}, we see that when $t\in [t_k, t_{k+1}]$, 
\[
\tilde F(S(t-b))=\int_{t_k}^t F(S(u-b))du +O(\Delta^2)\,. 
\]
Thus
\[
\exp\left(\tilde F(S(t-b))\right)=I+\int_{t_k}^t F(S(u-b))du +O(\Delta^2)\,. 
\]
Thus we have a formula for $p(t)$ which is analogous to the one for $p^\pi(t)$ (Equation 
\eqref{ndim3} ): 
 \begin{eqnarray} 
p  (t) 
 &=& \Big[I+ \int_{\lbar{t}}^t  F(S(u-b))du + O(\Delta^2) \Big]\prod_{k=0}^
 {\lfloor t\rfloor} \Big[ I+\int_{t_k}^{t_{k+1}} F(S(u-b))du +O(\Delta^2) \Big]\nonumber\\
 &=& \rho(\lbar{t}, t)\prod_{k=0}^{\lfloor t\rfloor}  \rho(t_k, t_{k+1})\,,
 \end{eqnarray}
 where
 \[
 \rho(r, s)=I+ \int_{r}^s  F(S(u-b))du + O(\Delta^2)
 \,.
 \]
 We can also write
 \begin{eqnarray} 
p^\pi   (t) 
 &=& \rho^\pi(\lbar{t}, t)\prod_{k=0}^{\lfloor t\rfloor}  \rho^\pi (t_k, t_{k+1})\,,
 \end{eqnarray}
where
 \[
 \rho^\pi (r, s)=I+    F(S^\pi (\lbar{s}-b)) (s-r)  
 \,.
 \]
 When $r,s\in [t_k, t_{k+1}], r<s$, we have by the Lipschitz condition
\begin{eqnarray}
 |\rho(r, s)-\rho^\pi(r, s)|
 &\le &    | F(S(t_k-b)) -F(S^\pi (t_k-b)) |  (s-r) \nonumber\\ 
 & &+\int_r^s |
 F(S(u-b))-F(S^\pi (t_k-b))|du+ O(\Delta^2)\nonumber\\ 
 &\le& C|S(t_k-b)  -S^\pi (t_k-b) |+O(\Delta^{3/2} )\,. 
 \end{eqnarray} 
We also have
\begin{equation} 
| \rho^\pi(r, s)|= |I+    F(S^\pi (\lbar{s}-b)) (s-r)
\le |I+C(s-r)|  \le e^{C(s-r)}\,.  
\end{equation} 
In the same way we have
\begin{equation} 
| \rho (r, s)|   \le e^{C(s-r)}\,.  
\end{equation} 
Thus
\begin{eqnarray}
|p^\pi   (t) -p(t)|
&\le & \left| \rho (\lbar{t}, t)-\rho^\pi(\lbar{t}, t)\right|\prod_{k=0}^{\lfloor t\rfloor}  \rho^\pi (t_k, t_{k+1})\nonumber\\
&&\qquad + \sum_{\ell=0}^{\lbar{t}} \left| \rho (t_\ell , t_{\ell+1})-\rho^\pi(t_\ell , t_{\ell+1})\right|
 \rho (\lbar{t}, t) \prod_{k=0, k\not=\ell 
}^{\lfloor t\rfloor}  \rho^\pi (t_k, t_{k+1})\nonumber\\ 
&\le & \left[C\left|S(t_k-b)  -S^\pi (t_k-b) \right|+O(\Delta^{3/2} )  \right]
\prod_{k=0}^{\lfloor t\rfloor} e^{C (  t_{k+1}-t_k)} \nonumber\\
&&\qquad + \sum_{\ell=0}^{\lbar{t}} \left[ C|S(t_\ell -b)  -S^\pi (t_\ell
-b) |+O(\Delta^{3/2} )  \right]
 \rho (\lbar{t}, t) \prod_{k=0, k\not=\ell 
}^{\lfloor t\rfloor} e^{C (  t_{\ell+1}-t_\ell )} \nonumber\\ 
\end{eqnarray}
 Thus  we have for some $C>0$
\begin{eqnarray} 
&& I_1 \leq C \mathbb{E}\sup_{0\le t\le r}\Big|S (t-b)-S^{\pi} (t-b)\Big|^p+ K_1\mathbb{E}\sup_{0\le t\le r}\Big|v_2(u)-S^{\pi} (t-b)\Big|^p. \nonumber
\end{eqnarray}
Then by lemma \ref{a26} we have
\begin{eqnarray} 
&& I_1  \leq C \mathbb{E}\sup_{0\le t\le r}\Big|S (t-b)-S^{\pi} (t-b)\Big|^p+ C \De^{p/2}. \label{a29}
\end{eqnarray} 
We now bound $\mathbb{E}\sup_{0 \leq t \leq r}|X(t)-X^{\pi}(t)|^p$.
Denote 
\begin{eqnarray}
   && A_{i,t} = \sum_{j=1}^d\sum_{\substack{0 \leq u \leq t\\ \De Z(u) \neq 0}} \ln\Big(1+ g_{ij}(S (u-b))Y_{j,N_j(u)} \Big)\nonumber\\&& 
   A^{\pi}_{i,t} = \sum_{j=1}^d\sum_{\substack{0 \leq u \leq t\\ 
   \De Z(u) \neq 0}} \ln\Big(1+ g_{ij}(v_2(u))Y_{j,N_j(u)} \Big) 
\end{eqnarray} 
and denote  $I_2 = \mathbb{E}\Big(\sup_{0 \leq t \leq r}|X(t) - X^{\pi}(t)|^p\Big)$. 
Then,
\begin{eqnarray*}&&
I_2 = \mathbb{E}\Big(\sup_{0 \leq t \leq r}|X(t) - X^{\pi}(t)|^p\Big)\\
&& \leq\Big(\mathbb{E} \sup_{0 \leq t \leq r}\sum_{i=1}^d\Big|\sum_{\substack{0 \leq u \leq t\\ \De Z(u) \neq 0}} \sum_{j=1}^d\big[\ln(1 +g_{ij}(S (u-b))Y_{j,N_j(u)} )-
\nonumber \ln(1 +g_{ij}(v_2(u))Y_{j,N(u)})\\
\\
&&\qquad  +\displaystyle\int_0^t (f_{ii}( S(u-b))-f_{ii}( v_2(u)))du )\big]\Big|^{2p}\Big)\Big)^{1/2}  \Big(\mathbb{E}\Big(|\exp(A_{i,t})+\exp(A^{\pi}_{i,t})|^{2p}\Big)\Big)^{1/2}
\nonumber
\\
&&= \Big(\Big(\sum_{i=1}^d \mathbb{E}\sup_{0 \leq t \leq r}\Big|\displaystyle\int_{\JJ \times [0,t]}\sum_{j=1}^d\left[\ln(1+z_jg_{ij}(S(u-b)))-\ln(1+z_jg_{ij}(v_2(u)))\right] \tilde{N}_j(du,dz)\nonumber\\
&&
+\displaystyle\int_{\JJ \times [0,t]}\sum_{j=1}^d\left[\ln(1+z_jg_{ij}(S (u-b)))-\ln(1+z_jg_{ij}(v_2(u)))\right]\nu_j(dz)du\\&&+\displaystyle\int_0^t (f_{ii}( S(u-b))-f_{ii}( v_2(u)))du\nonumber\Big|^{2p}\Big)\Big)^{1/2}\cdot \Big(\mathbb{E}\Big(|\exp(A_{i,t})+\exp(A^{\pi}_{i,t})|^{2p}\Big)\Big)^{1/2}.\nonumber
\end{eqnarray*}
Then for some $C_1>0$ we have
\begin{eqnarray}
 I_2&&\leq \Big[\Big(C_1 \mathbb{E}\sup_{0 \leq t \leq r}\Big|\displaystyle\int_{\JJ \times [0,t]}\sum_{j=1}^d\left[\ln(1+z_jg_{1j}(S (u-b)))-\ln(1+z_jg_{1j}(v_2(u)))\right] \tilde{N}_j(du,dz_j)\Big|^{2p}\Big)^{1/2} \nonumber\\ 
 &&
+\Big(C_1 \mathbb{E}\sup_{0 \leq t \leq r}\Big|\int_{\JJ\times [0,t]}\sum_{j=1}^d\left[ \ln(1+z_jg_{1j}(S (u-b)))-\ln(1+z_jg_{1j}(v_2(u)))\right] \nu_j(dz_j)du\Big|^{2p}\Big)^{1/2} \nonumber\\ 
 &&
+\Big(C_1 \mathbb{E}\sup_{0 \leq t \leq r}\Big|\displaystyle\int_0^t (f_{ii}( S(u-b))-f_{ii}( v_2(u)))du\Big|^{2p}\Big)^{1/2}\Big]\nonumber
\\&&\cdot \Big(\mathbb{E}\Big(|\exp(A_{1,t})+\exp(A^{\pi}_{1,t})|^{2p}\Big)\Big)^{1/2}
\nonumber\\&&  =: C_1(I^{1/2}_{21} + I^{1/2}_{22}+I^{1/2}_{23})\cdot \Big(\mathbb{E}\Big(|\exp(A_{1,t})+\exp(A^{\pi}_{1,t})|^{2p}\Big)\Big)^{1/2}.\nonumber
\end{eqnarray}
Using the Lipschitz condition on $g_{ij}$, $\int_{\JJ}z_j\nu_j(dz_j) = K_{\nu}< \infty $ for $j=1,2 \cdots, d$, lemma \eqref{a26} and Assumption 3 we have 
\begin{eqnarray*}
&&I_{22}   \leq \mathbb{E}\sup_{0 \leq t \leq r}\Big|\int_{\JJ\times [0,t]}\sum_{j=1}^d\left[ \ln(1+z_jg_{ij}(S (u-b)))-\ln(1+z_jg_{ij}(v_2(u)))\right] \nu_j(dz_j)du\Big|^{2p}\Big)\\
&&\qquad 
\leq C\mathbb{E}\sup_{0 \leq t \leq r}\Big| S (t-b)-S ^{\pi}(t-b)\Big|^{2p}+ C \mathbb{E}\sup_{0 \leq t \leq r}\Big| v_2(u)-S^{\pi} (t-b)\Big|^{2p}\\
&&=: C \mathbb{E}\sup_{0 \leq t \leq r}\Big| S(t-b)-S^{\pi}(t-b)\Big|^{2p} +C\De^p.
\end{eqnarray*}
Using the Burkholder-Davis-Gundy inequality we have
\begin{eqnarray*}
&& I_{21} \\&&\leq \sum_{i=1}^d \mathbb{E}\Big(\displaystyle\int_{\JJ }\displaystyle\int_0^t\sum_{j=1}^d\Big|\ln(1+z_jg_{ij}(S(u-b)))-\ln(1+z_jg_{ij}(v (u-b)))\Big|^2 \nu_j(dz)du\Big)^{p}.
\end{eqnarray*}
Similar to the bound for $I_{22}$ we   have  
\begin{eqnarray*}
&& I_{21} \leq 
C\mathbb{E}\sup_{0 \leq t \leq r}\Big| S (t-b)-S^{\pi} (t-b)\Big|^{2p}+C \De^p.
\end{eqnarray*}
Similar to the bound for $I_{22}$ using assumption (A2) we have
\begin{eqnarray*}
&& I_{23} \leq 
C\mathbb{E}\sup_{0 \leq t \leq r}\Big| S (t-b)-S^{\pi} (t-b)\Big|^{2p}+C \De^p.
\end{eqnarray*}

Combining the bounds for $I_{21}, I_{22}, I_{23}$ with the help of lemma \eqref{a19}, we get for some $K_2>0$
\begin{equation}
    I_{2} \leq  K_2\Big(\mathbb{E}\sup_{0 \leq t \leq r}\Big| S (t-b)-S^{\pi} (t-b)\Big|^{2p}\Big)^{1/2}+K_2\De^{p/2}. \label{a28}
\end{equation}
 We write  $I_3= \mathbb{E}\Big(\sup_{0 \leq t \leq r}|S(t) - S^{\pi}(t)|^p\Big)$.  Then  we have
 \begin{eqnarray*}&&
 I_3 = \mathbb{E}\Big(\sup_{0 \leq t \leq r}|S(t) - S^{\pi}(t)|^p\Big)
 \leq \mathbb{E}\Big(\sup_{0 \leq t \leq r}\Big|(p(t)-p^{\pi}(t))X(t) -  (X(t) - X^{\pi}(t))p^{\pi}(t) \Big|^p\Big)\nonumber \\&&
\leq 2^{p-1}\mathbb{E}\Big(\sup_{0 \leq t \leq r}\Big| (p(t)-p^{\pi}(t) )X(t)\Big|^p\Big) 
+2^{p-1}\mathbb{E}\Big(\sup_{0 \leq t \leq r}\Big|p^{\pi}(t)(X(t) - X^{\pi}(t)) \Big|^p\Big). \nonumber \\&&
=: C(I_{31}+ I_{32}).
 \end{eqnarray*}
We now bound $I_{31},I_{32}$
\begin{eqnarray}
&& I_{31} \leq  C\Big(\mathbb{E}\Big(\sup_{0 \leq t \leq r}\Big| X(t)\Big|^{2p}\Big)\Big)^{1/2}\Big( \mathbb{E}\Big(\sup_{0 \leq t \leq r}\Big| (p(t)-p^{\pi}(t) )\Big|^{2p}\Big) \Big)^{1/2}.
\end{eqnarray} 
Using the Lemmas \ref{a19} and \ref{a29} we will have f 
\begin{eqnarray}&&
I_{31}  
\leq C \Big(\mathbb{E}\sup_{0\leq t\leq r}\Big|S (t-b)-S^{\pi} (t-b)\Big|^{2p}+\De^p\Big)^{1/2}. \nonumber
\end{eqnarray}
Using assumption 2 we can show that $p^{\pi}$ is bounded, hence we can write using \eqref{a28}    
\begin{eqnarray}
I_{32} &\leq& C  \Big(\Big(\mathbb{E}\sup_{0 \leq t \leq r}\Big| S (t-b)-S^{\pi} (t-b)\Big|^{4p}\Big)^{1/2}+\De^p \Big)^{1/2}.
\end{eqnarray}
Hence we have  for some $K_3>0$
\begin{eqnarray}
&& I_{3}\leq K_{3}\Big(\mathbb{E}\sup_{0 \leq t \leq r}\Big|S (t-b)-S^{\pi} (t-b)\Big|^{2p}\Big)^{1/2}+ K_{3}\De^{p/2} \label{a27}.
\end{eqnarray} 
Therefore we get   
\begin{eqnarray}
&&
\mathbb{E}\Bigg[\sup_{0 \leq t \leq r}\Big[|S(t) - S^{\pi}(t)|^p\Big]\Bigg] \nonumber\\
&& 
 \leq C \Big(\mathbb{E}\sup_{0\leq t \leq r}\Big|S (t-b)-S^{\pi} (t-b)\Big|^{2p}\Big)^{1/2}+  K\De^{p/2}. \label{a30}
\end{eqnarray}
Taking $r=b$, we have
\begin{eqnarray} 
\mathbb{E}\Bigg[\sup_{0 \leq t \leq b}\Big[|S(t) - S^{\pi}(t)|^p\Big]\Bigg] \le C \De^{p/2} 
\end{eqnarray}
for any $p\ge 2$.   Now, taking $r=2b$ in \eqref{a30}, we have
\begin{eqnarray}
 &&
 \mathbb{E}\Bigg[\sup_{0 \leq t \leq 2b}\Big[|S(t) - S^{\pi}(t)|^p\Big]\Bigg]
 \le C \left[ \mathbb{E} \sup_{-b\le t\le b} \left|S(t))- S^{\pi}(t) \right| ^{2p}\right]^{1/2} +K \De^{p/2} \nonumber\\
 &&\qquad \le C \left[ K \De^p \right]^{1/2} +K\De^{p/2} \le C \De^{p/2}\,.   
\end{eqnarray} 
Continuing this way, we obtain for any positive integer $k\in \NN$,
\begin{eqnarray}
 &&
 I_{0 \leq t \leq kb }
  \le C_{p, k, d, T} \De^{p/2}\,.   
\end{eqnarray} 
Now, since $T$ is finite, we can choose a $k$ such that
$(k-1)b<T\le kb$.   This completes the proof of the theorem. 
\end{proof}



\begin{thebibliography}{99}
\bibitem{matanlysis}
    Bellman, Richard,
     Introduction to matrix analysis,
 McGraw-Hill Book Co., Inc., New York-Toronto-London,
      1960.
\bibitem{rpaper}Agrawal, Nishant; Hu, Yaozhong. 2020. Jump Models with Delay—Option Pricing and Logarithmic Euler–Maruyama Scheme Mathematics 8, no. 11: 1932.
\bibitem{humulti1}Hu, Yaozhong. Multi-dimensional geometric {B}rownian motions, {O}nsager-{M}achlup functions, and applications to mathematical finance, Acta Math. Sci. Ser. B (Engl. Ed.),
2000,
no.3, 341--358.
\bibitem{hu} Hu, Y. 
Calculs formels sur les EDS de Stratonovitch.  S\'eminaire de Probabilit\'es, XXIV, 1988/89, 453-460,
Lecture Notes in Math., 1426, Springer, Berlin, 1990.
\bibitem{St} Strichartz, R. S.  
The Campbell-Baker-Hausdorff-Dynkin formula and solutions of differential equations.
J. Funct. Anal. 72 (1987),  320-345.
\end{thebibliography}
\end{document}